\def\R{{\mathbb R}}
\def\N{{\mathbb N}}
\def\p{\partial}
\def\fo{\forall\,}
\def\l{\label}
\def\bes{\begin{equation*}}
\def\ees{\end{equation*}}
\def\be{\begin{equation}}
\def\ee{\end{equation}}
\def\d{\displaystyle}
\def\O{\Omega}
\def\ve{\varepsilon}
\def\supp{\operatorname{supp}}
\theoremstyle{definition}
\newtheorem{prop}{Proposition}
\newtheorem{lemma}[prop]{Lemma}
\theoremstyle{definition}
\theoremstyle{definition}
\theoremstyle{definition}
\def\beq{\begin{equation}}
\def\eeq{\end{equation}}
\def\bes{\begin{equation*}}
\def\ees{\end{equation*}}
\def\epr{\end{proof}}
\def\bpr{\begin{proof}}
\newcommand\blfootnote[1]{%
  \begingroup
  \renewcommand\thefootnote{}\footnote{#1}%
  \addtocounter{footnote}{-1}%
  \endgroup
}
\begin{document}

\title{Radial extensions in fractional Sobolev spaces}
\author{Haim Brezis$^{(1), (2)}$, Petru Mironescu$^{(3)}$\Envelope, Itai Shafrir$^{(4)}$}

\date{February 20, 2018}

\maketitle

\begin{abstract}
 Given $f:\partial (-1,1)^n\to\R$, consider its  radial extension
  $Tf(X):=f(X/\|X\|_{\infty})$, $\forall\, X\in [-1,1]^n\setminus\{0\}$. In \enquote{On some questions of topology for $S^1$-valued fractional Sobolev spaces} (RACSAM 2001), the first two authors (HB and PM) stated the following auxiliary result  (Lemma D.1).  If $0<s<1$, $1< p<\infty$  and $n\ge 2$ are such that $1<sp<n$, then $f\mapsto Tf$ is a bounded linear operator from $W^{s,p}(\partial (-1,1)^n)$ into $W^{s,p}((-1,1)^n)$. The proof of this result contained a flaw detected by the third author (IS). We present a correct proof. We also establish a variant of this result involving higher order derivatives and more general radial extension operators. More specifically, let $B$ be the unit ball for the standard Euclidean norm $|\ |$ in $\R^n$, and set $U_af(X):=|X|^a\, f(X/|X|)$, $\forall\, X\in \overline B\setminus\{0\}$,  $\fo f:\partial B\to\R$. Let $a\in\R$, $s>0$, $1\le p<\infty$ and $n\ge 2$ be such that $(s-a)p<n$.  Then $f\mapsto U_af$ is a bounded linear operator from $W^{s,p}(\partial B)$ into $W^{s,p}(B)$.

\end{abstract}

\vskip 1cm
In\blfootnote{Keywords: Sobolev spaces; fractional Sobolev spaces; radial extensions}\blfootnote{MSC 2000 classification: 46E35} \cite{qtss}, the first two authors  stated the following
\begin{lemma}
\l{a1}
(\cite[Lemma D.1]{qtss}) Let $0<s<1$, $1<p<\infty$ and $n\ge 2$ be such that $1<sp<n$. Let 
\be
\l{a3}
Q:=(-1,1)^n.
\ee

Set 
\be
\l{a4}
T f(X):=f(X/\|X\|_\infty), \ \fo X\in \overline Q\setminus\{0\},\, \fo f:\p Q\to\R;
\ee 
here, $\|\ \|_\infty$ is the sup norm in $\R^n$. Then   $f\mapsto T f$ is a bounded linear operator from $W^{s,p} (\p Q)$ into $W^{s,p} (Q)$.
\end{lemma}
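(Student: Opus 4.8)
The plan is to prove that $T$ is linear (immediate from the definition of $Tf$) and bounded, i.e.\ that $\|Tf\|_{W^{s,p}(Q)}\le C\|f\|_{W^{s,p}(\p Q)}$; here all spaces carry the usual Gagliardo norm, and $\p Q$ — a compact Lipschitz $(n-1)$-manifold — is equipped with the Euclidean distance inherited from $\R^n$ (equivalently the geodesic distance: the two are bi-Lipschitz, so the induced seminorms are comparable). The engine is the change of variables adapted to $\|\ \|_\infty$: the map $(0,1)\times\p Q\ni(r,\omega)\mapsto r\omega\in Q\setminus\{0\}$ is a bi-Lipschitz bijection once one discards the $\|\ \|_\infty$-spheres lying over the $(n-2)$-skeleton of $\p Q$ (a null set), and a face-by-face Jacobian computation gives $dX=r^{n-1}\,dr\,d\mathcal H^{n-1}(\omega)$, so that $\int_Q g(X)\,dX=\int_0^1\!\int_{\p Q} g(r\omega)\,r^{n-1}\,d\mathcal H^{n-1}(\omega)\,dr$. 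Since $Tf(r\omega)=f(\omega)$, this immediately yields $\|Tf\|_{L^p(Q)}^p=\tfrac1n\|f\|_{L^p(\p Q)}^p$, which takes care of the $L^p$ part.

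For the seminorm I would change variables $X=r\omega$, $Y=\rho\eta$ in the Gagliardo double integral and apply Tonelli, obtaining
\[
|Tf|_{W^{s,p}(Q)}^p=\int_{\p Q}\!\int_{\p Q}|f(\omega)-f(\eta)|^p\,K(\omega,\eta)\,d\mathcal H^{n-1}(\omega)\,d\mathcal H^{n-1}(\eta),\qquad K(\omega,\eta):=\int_0^1\!\int_0^1\frac{(r\rho)^{n-1}}{|r\omega-\rho\eta|^{n+sp}}\,dr\,d\rho .
\]
Everything then reduces to the kernel bound $K(\omega,\eta)\le C\,|\omega-\eta|^{-((n-1)+sp)}$ for $\omega\neq\eta$ in $\p Q$: indeed $(n-1)+sp$ is exactly the Gagliardo exponent attached to the $(n-1)$-dimensional set $\p Q$, so the displayed double integral is then $\le C\,|f|_{W^{s,p}(\p Q)}^p$, which finishes the proof.

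To establish the kernel bound I would first record the elementary geometric inequality
\[
|r\omega-\rho\eta|\ \ge\ c_n\bigl(|r-\rho|+\max(r,\rho)\,|\omega-\eta|\bigr),\qquad c_n:=(2+\sqrt n)^{-1},\ \ \omega,\eta\in\p Q,\ r,\rho>0 :
\]
one has $|r-\rho|=\bigl|\,\|r\omega\|_\infty-\|\rho\eta\|_\infty\,\bigr|\le\|r\omega-\rho\eta\|_\infty\le|r\omega-\rho\eta|$, while the identities $r\omega-\rho\eta=r(\omega-\eta)+(r-\rho)\eta=\rho(\omega-\eta)+(r-\rho)\omega$, together with $|\omega|,|\eta|\le\sqrt n$ on $\p Q$, give $\max(r,\rho)\,|\omega-\eta|\le(1+\sqrt n)\,|r\omega-\rho\eta|$; adding the two bounds yields the claim. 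Then, assuming $r\le\rho$ by symmetry and substituting $r=\rho u$,
\[
\int_0^\rho\frac{r^{n-1}\,dr}{(\rho-r+\rho\,|\omega-\eta|)^{n+sp}}=\rho^{-sp}\int_0^1\frac{u^{n-1}\,du}{(1-u+|\omega-\eta|)^{n+sp}}\le \frac{\rho^{-sp}}{n+sp-1}\,|\omega-\eta|^{-(n+sp-1)},
\]
and the remaining integral $\int_0^1\rho^{\,n-1-sp}\,d\rho=\tfrac1{n-sp}$ converges \emph{precisely because} $sp<n$. This is the only point where the hypotheses enter — the lower bound $sp>1$ is in fact not needed in the argument. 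Collecting the constants gives the kernel bound, hence the theorem.

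The main obstacle, and I suspect the source of the gap in \cite{qtss}, is exactly this kernel estimate on the diagonal of $\p Q\times\p Q$: it rests on the \emph{sharp} lower bound for $|r\omega-\rho\eta|$, whose two halves come respectively from the fact that the sup-norm radial coordinate $\|X\|_\infty$ controls $|r-\rho|$ for free and from the identity $r\omega-\rho\eta=r(\omega-\eta)+(r-\rho)\eta$ for the angular part — a cruder estimate via the angle between $\omega$ and $\eta$ degenerates at the corners of $Q$ and at antipodal boundary points. One must also keep in mind throughout that, $\p Q$ being non-smooth, the relevant distance on $\p Q$ is the Euclidean (equivalently, geodesic) one.
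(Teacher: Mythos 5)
Your proof is correct, and it takes a genuinely different route from the paper. The paper never works directly on the cube: it first transfers the problem to the Euclidean ball via the bi-Lipschitz map $X\mapsto (|X|/\|X\|_\infty)X$ and then proves a more general statement there (for $U_af(X)=|X|^af(X/|X|)$ and all $s>0$), where the crucial kernel estimate $\int_{1/2}^1|x-ty|^{-(n+sp)}\,dt\le C|x-y|^{-(n-1+sp)}$ is obtained from the sphere-specific identity $|x-ty|^2=t^2+1-2t\langle x,y\rangle$ and an explicit one-dimensional integral. You instead stay on $\p Q$ and replace that computation by the elementary two-term lower bound $|r\omega-\rho\eta|\ge c_n(|r-\rho|+\max(r,\rho)|\omega-\eta|)$, whose first half exploits that $\|X\|_\infty$ is exactly the radial coordinate on the cube and whose second half is the triangle inequality; this yields the same critical exponent $n-1+sp$ in the kernel bound $K(\omega,\eta)\le C|\omega-\eta|^{-(n-1+sp)}$, which is precisely the "stronger estimate" the paper identifies as the missing ingredient in \cite{qtss}. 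Your verification of the lower bound, the substitution $r=\rho u$ giving $\rho^{-sp}$, and the final integrability $\int_0^1\rho^{n-1-sp}\,d\rho<\infty$ are all correct, and you rightly observe that only $sp<n$ is used (so you in fact prove the sharper Lemma \ref{a2} for $0<s<1$, any $p\ge1$). What the paper's detour through the ball buys is the generalization to $s\ge1$ and to the weights $|X|^a$, which requires a smooth sphere; your argument is shorter and self-contained for the statement at hand. The paper itself notes that a direct proof on the cube is possible (by adapting \cite{bmprep}), and yours is essentially such a proof.
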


The argument presented  in \cite{qtss} does not imply the conclusion of Lemma \ref{a1}. Indeed, it is established in \cite{qtss} (see estimate (D.3) there) that
\bes
|Tf|_{W^{s,p}(Q)}^p\le C\int_{\p Q}\int_{\p Q}\frac{|f(x)-f(y)|^p}{\|x-y\|_\infty^{n+sp}}\, d\sigma(x) d\sigma(y).
\ees

However, this does not imply the desired conclusion in Lemma \ref{a1}, for which we need the stronger estimate
\bes
|Tf|_{W^{s,p}(Q)}^p\le C\int_{\p Q}\int_{\p Q}\frac{|f(x)-f(y)|^p}{\|x-y\|_\infty^{n-1+sp}}\, d\sigma(x) d\sigma(y).
\ees

 In what follows, we establish the following 
 slight generalization of Lemma \ref{a1}.

\begin{lemma}
\l{a2}
 Let $0<s\le 1$, $1\le p<\infty$ and $n\ge 2$ be such that $sp<n$. Let $Q$, $T$ be as in \eqref{a3}--\eqref{a4}. Then  $f\mapsto T f$ is a bounded linear operator from  $W^{s,p} (\p Q)$ into $W^{s,p} (Q)$.
\end{lemma}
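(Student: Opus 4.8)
The plan is to exploit the $0$-homogeneity of $Tf$ and reduce the statement to a one-dimensional kernel estimate. Since the sup norm and the Euclidean norm on $\R^n$ are comparable, I may use $\|\ \|_\infty$ in all the Gagliardo seminorms, on $Q$ and on $\p Q$ alike. The basic device is the polar-type change of variables adapted to the cube: $(r,\omega)\mapsto r\omega$ is a bijection from $(0,1]\times\p Q$ onto $\overline Q\setminus\{0\}$, it satisfies $\|r\omega\|_\infty=r$, and, computing one face of $\p Q$ at a time (the edges being Lebesgue-negligible), one has $dX=r^{n-1}\,dr\,d\sigma(\omega)$. With $Tf(r\omega)=f(\omega)$ this gives at once the $L^p$ bound $\|Tf\|_{L^p(Q)}^p=\tfrac1n\|f\|_{L^p(\p Q)}^p$, so everything reduces to the seminorm. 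Assume first $0<s<1$. Writing $X=r\omega$, $Y=t\eta$, the change of variables gives
\[
|Tf|_{W^{s,p}(Q)}^p=\iint_{\p Q\times\p Q}|f(\omega)-f(\eta)|^p\,I(\omega,\eta)\,d\sigma(\omega)\,d\sigma(\eta),\qquad
I(\omega,\eta):=\int_0^1\!\!\int_0^1\frac{r^{n-1}t^{n-1}}{\|r\omega-t\eta\|_\infty^{\,n+sp}}\,dr\,dt ,
\]
so it suffices to prove the kernel bound $I(\omega,\eta)\le C\,\|\omega-\eta\|_\infty^{-(n-1+sp)}$, which then yields $|Tf|_{W^{s,p}(Q)}^p\le C\,|f|_{W^{s,p}(\p Q)}^p$.

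This kernel bound is the heart of the matter, and precisely the point where the previous argument fell short: replacing $\|r\omega-t\eta\|_\infty$ by a constant times $\|\omega-\eta\|_\infty$ is false for small $r,t$, and one must instead use the sharp lower bound
\[
\|r\omega-t\eta\|_\infty\ \ge\ \tfrac14\bigl(|r-t|+\min(r,t)\,\|\omega-\eta\|_\infty\bigr)\qquad(\omega,\eta\in\p Q,\ r,t>0),
\]
which follows from a reverse triangle inequality ($\|r\omega-t\eta\|_\infty\ge|r-t|$), the decomposition $r\omega-t\eta=r(\omega-\eta)-(t-r)\eta$ when $r\le t$, and a dichotomy according to whether $|r-t|$ exceeds $\tfrac12\min(r,t)\|\omega-\eta\|_\infty$. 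Inserting this into $I$, using the $(r,t)$-symmetry to restrict to $0<r<t<1$, and substituting $r=\sigma t$, one obtains
\[
I(\omega,\eta)\ \le\ C\Bigl(\int_0^1 t^{\,n-1-sp}\,dt\Bigr)\Bigl(\int_0^1\frac{\sigma^{n-1}}{(1-\sigma(1-\rho))^{\,n+sp}}\,d\sigma\Bigr),\qquad \rho:=\|\omega-\eta\|_\infty\in(0,2].
\]
The first factor is finite \emph{exactly because $sp<n$} — this is where the hypothesis enters. For the second factor $K(\rho)$: if $\rho\ge\tfrac12$ the denominator is $\ge(\tfrac12)^{n+sp}$, so $K(\rho)$ is bounded, hence $\lesssim\rho^{-(n-1+sp)}$ since $\rho\le2$; if $\rho<\tfrac12$, the substitution $u=1-\sigma(1-\rho)$ turns $K(\rho)$ into $(1-\rho)^{-n}\int_\rho^1(1-u)^{n-1}u^{-(n+sp)}\,du$, and using $(1-u)^{n-1}\le(1-\rho)^{n-1}$ followed by $\int_\rho^1 u^{-(n+sp)}\,du\le\tfrac{1}{n+sp-1}\rho^{-(n+sp-1)}$ (licit as $n+sp-1\ge1$) gives $K(\rho)\lesssim\rho^{-(n-1+sp)}$. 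This establishes the kernel bound, and with it the case $0<s<1$.

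For $s=1$ (so $p<n$) I would instead use the equivalent norm $\|u\|_{W^{1,p}}^p\asymp\|u\|_{L^p}^p+\|\nabla u\|_{L^p}^p$ on $Q$ and on $\p Q$. On the cone over the face $\{x_i=1\}$, $Tf(X)=f(X/X_i)$ with $X_i=\|X\|_\infty$, so the chain rule yields the pointwise estimate $|\nabla(Tf)(X)|\le C\,\|X\|_\infty^{-1}\,|\nabla_{\p Q}f(X/\|X\|_\infty)|$; the same polar change of variables then gives $\|\nabla(Tf)\|_{L^p(Q)}^p\le C\bigl(\int_0^1 r^{\,n-1-p}\,dr\bigr)\|\nabla f\|_{L^p(\p Q)}^p<\infty$ because $p<n$. (One argues first for Lipschitz $f$ — for which this pointwise gradient is the weak one away from $0$, and $\{0\}$ is removable since $p<n$ — and then extends to all $f\in W^{1,p}(\p Q)$ by density, the extension being forced by the already established $L^p$ bound.) Combined with the $L^p$ estimate this finishes the proof. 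The main obstacle is the kernel bound of the second paragraph: it requires both the correct geometric lower bound on $\|r\omega-t\eta\|_\infty$ and a careful exploitation of the radial integration to bring the exponent down from $n+sp$ to $n-1+sp$ — exactly the gap in the original proof.
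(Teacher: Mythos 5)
Your proof is correct, and it follows a genuinely different route from the paper's. The paper does not argue on the cube at all: it transfers the problem to the Euclidean ball via the bi-Lipschitz maps $\Lambda,\Psi$ induced by $X\mapsto (|X|/\|X\|_\infty)X$, invokes the bi-Lipschitz invariance of $W^{s,p}$ for $0<s\le 1$, and then applies its Lemma \ref{a10} with $a=0$; there, spherical coordinates and the substitution $\rho=tr$ reduce matters to the one-variable kernel estimate $\int_{1/2}^1 |x-ty|^{-(n+sp)}\,dt\le C|x-y|^{-(n-1+sp)}$, which is computed by completing the square in $|x-ty|^2=t^2+1-2\langle x,y\rangle t$ — i.e.\ it exploits the quadratic structure of the Euclidean norm. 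You instead stay on the cube, keep both radial variables, and replace the quadratic trick by the sharp geometric lower bound $\|r\omega-t\eta\|_\infty\ge \frac14(|r-t|+\min(r,t)\|\omega-\eta\|_\infty)$, from which the same gain of one power in the exponent follows by elementary integration; your lower bound and the two-regime estimate of $K(\rho)$ are both correct, and the hypothesis $sp<n$ enters in the same place as in the paper (finiteness of $\int_0^1 r^{n-1-sp}\,dr$). Your treatment of $s=1$ by a direct chain-rule gradient bound plus density is also more elementary than the paper's, which for $s\ge1$ goes through a projection onto a hyperplane and homogeneous extensions $V_a$. What the paper's detour buys is generality (the ball lemma holds for all $s>0$ and all weights $a$, and yields Lemma \ref{a2} for the unit sphere of any norm); what your argument buys is self-containedness and directness for the cube, at the price of being tied to $0<s\le1$ — which is exactly the scope of the statement.
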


Lemma \ref{a2} can be generalized beyond one derivative, but for this purpose it is necessary to work on unit spheres  arising from norms smoother that  $\|\  \|_\infty$. 
We consider for example maps $f:\p B\to\R$, with 
\be
\l{b1}
B:=\,\text{the Euclidean unit ball in }\R^n.
\ee

For  $a\in\R$, set
\be
\l{b2}
U_a f(X):=|X|^a f(X/|X|),\ \fo X\in \overline B\setminus\{0\},\, \fo f:\p B\to\R;
\ee
here, $|\  |$ is the standard Euclidean norm in $\R^n$.

\smallskip
 We will prove the following
\begin{lemma}
\l{a10}
Let $a\in\R$, $s>0$, $1\le p<\infty$ and $n\ge 2$ be such that $(s-a)p<n$. Then  $f\mapsto  U_a f$ is a bounded linear operator from   $W^{s,p} (\p B)$ into $W^{s,p} (B)$. 
\end{lemma}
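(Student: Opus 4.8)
The plan is to reduce the statement to a concrete double integral estimate by using an explicit characterization of the Gagliardo-type seminorm $W^{s,p}$ adapted to the order of $s$. Let me think about the structure.

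For $s > 0$, write $s = m + \sigma$ with $m = \lfloor s \rfloor$ (or $m = \lceil s \rceil - 1$ if $s \notin \mathbb{N}$; if $s \in \mathbb{N}$ one uses $m = s-1$, $\sigma = 1$, or works directly with integer Sobolev spaces). The norm $\|u\|_{W^{s,p}(B)}^p$ is equivalent to $\|u\|_{L^p(B)}^p + \sum_{|\alpha| = m} |\partial^\alpha u|_{W^{\sigma,p}(B)}^p$ (with the understanding that when $\sigma = 1$ we instead use $\sum_{|\alpha| = s} \|\partial^\alpha u\|_{L^p(B)}^p$). So the first reduction is: it suffices to control $\|U_a f\|_{L^p(B)}$ and, for each multi-index $\alpha$ with $|\alpha| = m$, the quantity $|\partial^\alpha(U_a f)|_{W^{\sigma,p}(B)}$ (or $\|\partial^\alpha(U_a f)\|_{L^p}$ when $\sigma=1$) in terms of $\|f\|_{W^{s,p}(\partial B)}$.

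First I would compute the derivatives of $U_a f$. Writing $U_a f(X) = |X|^a f(X/|X|)$ in polar coordinates $X = r\omega$, $r = |X| \in (0,1]$, $\omega \in \partial B$, we have $U_a f(r\omega) = r^a f(\omega)$. Differentiating $\partial^\alpha$ of order $m$ brings down, by homogeneity, a factor $r^{a-m}$ times a smooth function on $\partial B$ built from $f$ and its tangential derivatives up to order $m$; schematically $\partial^\alpha(U_a f)(r\omega) = r^{a-m} \sum_{|\beta| \le m} c_{\alpha\beta}(\omega) (D_{\partial B}^\beta f)(\omega)$, where $D_{\partial B}^\beta$ denotes tangential derivatives and the $c_{\alpha\beta}$ are smooth on $\partial B$. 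This is the homogeneity bookkeeping that makes everything work: each derivative costs one power of $r$, and the angular part stays a (fixed, smooth) differential operator applied to $f$. The key point for the $L^p$ control is then the one-variable radial integral $\int_0^1 r^{(a-m)p} r^{n-1}\, dr$, which converges precisely when $(m - a)p < n$ — but since $m \le s$ we have $(m-a)p \le (s-a)p < n$, so the hypothesis gives exactly what is needed, and the angular integral is controlled by $\|f\|_{W^{m,p}(\partial B)} \le \|f\|_{W^{s,p}(\partial B)}$.

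The main work — and the main obstacle — is the fractional seminorm estimate on $g_\alpha := \partial^\alpha(U_a f)$ when $0 < \sigma < 1$, i.e. bounding
\[
|g_\alpha|_{W^{\sigma,p}(B)}^p = \int_B \int_B \frac{|g_\alpha(X) - g_\alpha(Y)|^p}{|X-Y|^{n+\sigma p}}\, dX\, dY.
\]
The strategy here is to pass to polar coordinates $X = r\omega$, $Y = \rho\eta$ and split the difference $g_\alpha(r\omega) - g_\alpha(\rho\eta) = r^{a-m} h(\omega) - \rho^{a-m} h(\eta)$ (writing $h = \sum c_{\alpha\beta} D^\beta_{\partial B} f$) into a "radial part" $(r^{a-m} - \rho^{a-m}) h(\omega)$ and an "angular part" $\rho^{a-m}(h(\omega) - h(\eta))$. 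The angular part, after integrating in $r$ and $\rho$, should reduce to $\int_{\partial B}\int_{\partial B} |h(\omega) - h(\eta)|^p / |\omega - \eta|^{n-1+\sigma p}\, d\sigma(\omega) d\sigma(\eta)$ up to a harmless bounded factor, which is $|h|_{W^{\sigma,p}(\partial B)}^p \lesssim \|f\|_{W^{m+\sigma,p}(\partial B)}^p = \|f\|_{W^{s,p}(\partial B)}^p$; here the essential geometric input is the comparison $|X - Y| \gtrsim \rho|\omega - \eta|$ (or a symmetric version), together with the change-of-variables Jacobian $r^{n-1}\rho^{n-1}$, and the convergence of the resulting one-dimensional radial integrals, which again comes down to the condition $(s-a)p < n$. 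The radial part is a pure one-variable computation: one must show $\int_0^1\int_0^1 |r^{a-m} - \rho^{a-m}|^p \, r^{n-1}\rho^{n-1} / (\text{suitable distance})^{\ldots}\, dr\, d\rho < \infty$, which holds under the same arithmetic condition — this is exactly the kind of estimate that the flawed argument in \cite{qtss} got wrong by using the wrong power, so I would be careful to track the exponent $n-1+\sigma p$ versus $n+\sigma p$ and exploit that $X,Y$ range over a ball (dimension $n$) while $\omega,\eta$ range over a sphere (dimension $n-1$).

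Finally, the case $\sigma = 1$ (i.e. $s$ an integer, handled via $m = s - 1$) and the borderline bookkeeping when $s \in \mathbb{N}$ is routine once the homogeneity computation above is in place: one just estimates $\|\partial^\alpha(U_a f)\|_{L^p(B)}$ for $|\alpha| = s$ by the radial integral $\int_0^1 r^{(a-s)p + n - 1}\, dr < \infty$ (again guaranteed by $(s-a)p < n$) times $\|f\|_{W^{s,p}(\partial B)}^p$. Throughout, the implicit constants depend only on $n, s, p, a$. I expect the delicate point to be the angular/radial splitting for the fractional seminorm and, in particular, justifying the inequality $|X - Y| \gtrsim \max(r,\rho)\,|\omega - \eta|$ uniformly (or handling the region where $r, \rho$ are comparable separately from where they are not), since this is where the dimensional mismatch between $\partial B$ and $B$ is exploited and where the original proof's gap lay.
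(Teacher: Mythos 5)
Your overall architecture matches the paper's: write $s=m+\sigma$, observe that the $m$-th order derivatives of $U_af$ are again of the form $U_{a-m}h$ with $h$ built from tangential derivatives of $f$ with smooth coefficients (the paper does this bookkeeping after flattening a spherical cap by central projection onto a tangent hyperplane, which is only a cosmetic difference from your intrinsic version), control their $L^p$ norms by the radial integral $\int_0^1 r^{(a-m)p+n-1}\,dr<\infty$, and reduce everything to the Gagliardo seminorm estimate for a single fractional derivative, treated in polar coordinates via a radial/angular splitting. All of that is sound.

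However, the one step you defer is exactly the step that was wrong in \cite{qtss} and that this paper exists to supply, and the tool you name for it does not suffice. You assert that the angular part "should reduce to" $\int\!\!\int|h(\omega)-h(\eta)|^p/|\omega-\eta|^{n-1+\sigma p}$, citing as "the essential geometric input" the pointwise comparison $|X-Y|\ge c\max(r,\rho)\,|\omega-\eta|$ together with convergence of the radial integrals. But inserting that pointwise bound into the kernel and then integrating out $r,\rho$ yields the exponent $n+\sigma p$ on $|\omega-\eta|$, not $n-1+\sigma p$: this is precisely estimate (D.3) of \cite{qtss}, which is insufficient. The missing idea is that one power of $|\omega-\eta|$ must be gained by integrating in the radial variable \emph{before} estimating. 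With $A=\langle\omega,\eta\rangle\ge 0$ one has $|\omega-t\eta|^2=(t-A)^2+1-A^2$, whence
\begin{equation*}
\int_{1/2}^{1}\frac{dt}{|\omega-t\,\eta|^{n+\sigma p}}\le\int_{\R}\frac{dt}{\bigl((t-A)^2+1-A^2\bigr)^{(n+\sigma p)/2}}=\frac{C}{(1-A^2)^{(n-1+\sigma p)/2}}\le\frac{C}{|\omega-\eta|^{n-1+\sigma p}},
\end{equation*}
which is the paper's key estimate \eqref{x10}; without it (or an equivalent argument splitting $|r-\rho|\lessgtr r|\omega-\eta|$), your plan does not close. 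A secondary gap: the "radial part" $\int\!\!\int|r^{a-m}-\rho^{a-m}|^p(\cdots)$ is not a routine convergence check either; after substituting $\rho=tr$ it needs $|1-t^{a-m}|\le C(1-t)$ on $[1/2,1]$ combined again with \eqref{x10}-type integration and the strict inequality $p-\sigma p>0$ (the paper's integral $J$), and the computation is genuinely borderline there.
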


It is possible to establish directly Lemma \ref{a2} by adapting some arguments presented in Step 3 in the  proof of Lemma 4.1 in \cite{bmprep}. However, we will derive it from Lemma \ref{a10}.

\medskip
\noindent
{\it Proof of  Lemma \ref{a2} using Lemma \ref{a10}} Let 
\bes
\Phi :\R^n\to\R^n,\ \Phi (X):=\begin{cases}
\d \frac{|X|}{\|X\|_\infty}\, X,&\text{if }X\neq 0\\
0,&\text{if }X=0\end{cases},\  \Lambda:=\Phi_{|\overline B}\text{ and }\Psi:=\Phi_{|\p B}.
\ees

Clearly, 
\be
\l{a120}
\Lambda:\overline B\to\overline Q,\ \Psi:\p B\to \p Q\text{ are bi-Lipschitz homeomorphisms}
\ee
and 
\be
\l{a130}
T f =[U_0 (f\circ\Psi)]\circ\Lambda^{-1}.
\ee

Using \eqref{a120} and the fact that $0<s\le 1$, we find that
\be
\l{a140}
f\mapsto f\circ\Psi\text{  is a bounded linear operator from }W^{s,p}(\p Q)\text{ into }W^{s,p}(\p B)
\ee
and
\be
\l{a150}
g\mapsto g\circ\Lambda^{-1}\text{  is a bounded linear operator from }W^{s,p}(B)\text{ into }W^{s,p}(Q).
\ee

We obtain Lemma \ref{a2} from \eqref{a130}--\eqref{a150} and Lemma \ref{a10} (with $a=0$). The same argument shows that the conclusion of Lemma \ref{a2} holds for the unit sphere and ball of any norm in $\R^n$.\hfill$\square$

\medskip
\noindent
{\it Proof of Lemma \ref{a10}} Consider $a$, $s$, $p$ and $n$ such that
\be
\l{z2} 
a\in\R,\ s>0,\ 1\le p<\infty,\ n\ge 2\text{ and }(s-a)p<n.
\ee

 Considering spherical coordinates on $B$, we obtain that
\be
\l{c1}
\begin{aligned}
\|U_a f\|_{L^p(B)}^p&=
\int_0^1\int_{\p B}r^ {n-1}|U_af(r\, x)|^p\, d\sigma(x) dr\\
&=\int_0^1\int_{\p B} r^{n-1+ap}|f(x)|^p\, d\sigma(x) dr=\frac 1{n+ap} \|f\|_{L^p(\p B)}^p.
\end{aligned}
\ee

Here, we have used the fact that, by \eqref{z2}, we have
$
n+ap>n-(s-a)p>0$. 

\smallskip
In view of \eqref{c1},  it suffices to establish the estimate
\be
\l{c2}
|U_a f|_{W^{s,p}(B)}^p\le C\, \|f\|_{W^{s,p}(\p B)}^p,\ \fo f\in W^{s,p}(\p B),
\ee
for some appropriate $C=C_{a,s,p,n}$ and semi-norm $|\ |_{W^{s,p}}$ on $W^{s,p}(B)$. 

\smallskip
\noindent
{\it Step 1. Proof of  \eqref{c2} when $0<s<1$.}  We consider the standard Gagliardo semi-norm on $W^{s,p}(B)$. We have 
\bes
\begin{aligned}
|U_af|_{W^{s,p}(B)}^p&=\int_B\int_B \frac{|U_af(X)-U_af(Y)|^p}{|X-Y|^{n+sp}}\, dX dY\\
&=\int_0^1\int_0^1\int_{\p B}\int_{\p B}r^{n-1}\rho^{n-1}\frac{|U_a f(r\, x)-U_a f(\rho\, y)|^p}{|r\, x-\rho\, y|^{n+s p}}\,  d\sigma(x)d\sigma(y) dr d\rho\\
&=\int_0^1\int_0^1\int_{\p B}\int_{\p B}r^{n-1}\rho^{n-1}\frac{|r^a\, f(x)-\rho^a\, f(y)|^p}{|r\, x-\rho\, y|^{n+s p}}\,  d\sigma(x)d\sigma(y) dr d\rho\\
&=2\int_{\p B}\int_{\p B}\int_0^1\int_0^r r^{n-1}\rho^{n-1}\frac{|r^a\, f(x)-\rho^a\, f(y)|^p}{|r\, x-\rho\, y|^{n+s p}}\,  d\rho dr d\sigma(x) d\sigma(y).\end{aligned}
\ees

With the change of variable $\rho=t\, r$, $t\in [0,1]$, we find that
\bes
\begin{aligned}
|U_af|_{W^{s,p}(B)}^p
&=2\int_0^1 r^{n-(s-a)p-1}\, dr \int_{\p B}\int_{\p B}\int_0^1 t^{n-1}\,\frac{|f(x)-t^a\, f(y)|^p}{|x
-t\, y|^{n+sp}}\,  dt  d\sigma(x) d\sigma(y)
\\
&=
\frac 2{n-(s-a)\, p}\int_{\p B}\int_{\p B}\int_0^1 k(x,y,t)\, dt d\sigma(x) d\sigma(y),
\end{aligned}
\ees
with
\bes
k(x, y, t):=  t^{n-1}\,\frac{|f(x)-t^a\, f(y)|^p}{|x
-t\, y|^{n+sp}},\ \fo x,\, y\in\p B,\, \fo t\in [0,1].
\ees

In order to complete this step, it thus suffices to establish the estimates
\begin{gather}
\l{y1}
I_1:=\int_{\p B}\int_{\p B}\int_0^{1/2} k(x,y, t)\, dt d\sigma(x) d\sigma(y)\le C \|f\|_{L^p(\p B)}^p,\\
\l{y2}
I_2:=\int_{\p B}\int_{\p B}\int_{1/2}^1\frac{|f(x)-f(y)|^p}{|x-t\, y|^{n+sp}}\, dt d\sigma(x) d\sigma(y)\le C |f|_{W^{s,p}(\p B)}^p,\\
\l{y3}
I_3:=\int_{\p B}\int_{\p B}\int_{1/2}^1\frac{|(1-t^a)\, f(y)|^p}{|x-t\, y|^{n+sp}}\, dt d\sigma(x) d\sigma(y)\le C \|f\|_{L^{p}(\p B)}^p;
\end{gather}
here, $|\ |_{W^{s,p}(\p B)}$ is the standard Gagliardo semi-norm on $\p B$. 

In the above and in what follows, $C$ denotes a generic finite positive constant independent of $f$,  
whose value may change with different occurrences.

\smallskip
Using the obvious inequalities
\begin{gather*}
|x-t\, y|\ge 1-t\ge 1/2,\ \fo x, y\in\p B,\ \fo t\in [0,1/2],\\
|f(x)-t^a\, f(y)|\le (1+t^a)\, (|f(x)|+|f(y)|),
\end{gather*}
and the fact that, by \eqref{z2}, we have $n+ap>0$, 
we find that 
\bes
I_1\le C\int_0^{1/2}(t^{n-1}+t^{n-1+ap})\, dt\, \|f\|_{L^p(\p B)}^p\le C \|f\|_{L^p(\p B)}^p,
\ees
so that \eqref{y1} holds.

\smallskip
In order to obtain \eqref{y2}, it suffices to establish the estimate
\be
\l{x10}
\int_{1/2}^1 \frac 1{|x-t\, y|^{n+sp}}\, dt\le \frac{C}{|x-y|^{n-1+sp}},\ \fo x, y\in \p B.
\ee

Set $A:=\langle x, y\rangle\in [-1,1]$. If $A\le 0$, then $|x-t\, y|\ge 1$, $\fo t\in [1/2,1]$, and then \eqref{x10} is clear. Assuming $A\ge 0$ 
 we find, using the change of variable $t=A+(1-A^2)^{1/2}\, \tau$, 
\bes
\begin{aligned}
\int_{1/2}^1 \frac 1{|x-t\, y|^{n+sp}}\, dt&\le \int_\R \frac 1{|x-t\, y|^{n+sp}}\, dt
=\int_\R \frac 1{(t^2+1-2 A\, t)^{(n+sp)/2}}\, dt\\
&=\frac 1{(1-A^2)^{(n-1+sp)/2}}\, \int_\R \frac 1{(\tau^2+1)^{(n+sp)/2}}\, d\tau\\
&=\frac C{(1-A^2)^{(n-1+sp)/2}}\le \frac{C}{(2-2 A)^{(n-1+sp)/2}}=\frac {C}{|x-y|^{n-1+sp}},
\end{aligned}
\ees
and thus \eqref{x10} holds again. This completes the proof of \eqref{y2}.

\smallskip
In order to prove \eqref{y3}, we note that
\bes
|1-t^a|^p\le C(1-t)^p,\ \fo t\in [1/2,1],
\ees
and that the integral
\bes
J:=\int_{1/2}^1 \int_{\p B}\frac{ (1-t)^p}{|x-t\, y|^{n+sp}}\, d\sigma(x) dt
\ees
does not depend on $y\in \p B$.

By the above, we have 
\bes
I_3\le C\int_{1/2}^1\int_{\p B}\int_{\p B}\frac{(1-t)^p|f(y)|^p}{|x-t\, y|^{n+sp}}\, d\sigma (x) d\sigma(y) dt= C\, J\, \|f\|_{L^p(\p B)}^p,
\ees
and thus \eqref{y3} amounts to proving that $J<\infty$.  Since $J$ does not depend on $y$, we may assume that $y=(0,\ldots, 0, 1)$. Expressing $J$ in spherical coordinates and using the change of variable $t=1-\tau$, $\tau\in [0,1/2]$, we find that
\bes
J=C\int_{1/2}^1\int_0^\pi \frac{\tau^p \, \sin^{n-1}\theta}{(\tau^2+ 4(1-\tau)\sin^2\theta/2)^{(n+sp)/2}}\, d\theta d\tau.
\ees

When $\tau\in [0,1/2]$ and $\theta\in [0,\pi]$, we have
\bes
\begin{aligned}
\frac{\tau^p \, \sin^{n-1}\theta}{(\tau^2+ 4(1-\tau)\sin^2\theta/2)^{(n+sp)/2}}&\le C \frac{\tau^p \, \sin^{n-1}\theta}{(\tau+ \sin\theta/2)^{n+sp}}\le C \frac{\tau^p \, \sin^{n-1}\theta/2\, \cos\theta/2}{(\tau+ \sin\theta/2)^{n+sp}}\\
&\le C (\tau+ \sin\theta/2)^{p-sp-1}\, \cos\theta/2.
\end{aligned}
\ees

Inserting the last inequality into the formula of $J$, we find that
\bes
\begin{aligned}
J&\le C\int_0^{1/2}\int_0^\pi (\tau+ \sin\theta/2)^{p-sp-1}\, \cos\theta/2\, d\theta d\tau\\&= C\int_0^{1/2}\int_0^1(\tau+ \xi)^{p-sp-1}\, d\xi d\tau<\infty,
\end{aligned}
\ees
the latter inequality following from  $p-sp>0$. This completes the proof of \eqref{y3} and Step 1.

\medskip
\noindent
{\it Step 2. Proof of \eqref{c2} when $s\ge 1$.} We will reduce the case $s\ge 1$ to the case $0\le s<1$. 
Using the linearity of $f\mapsto U_a f$ and a partition of unity, we may assume with no loss of generality that $\supp f$ is contained in a spherical cap of the form $\{ x\in\p B;\, |x-{\bf e}|<\ve\}$ for some ${\bf e}\in \p B$ and sufficiently small $\ve$. We may further assume that ${\bf e}=(0,0,\ldots, 0, 1)$, and thus 
\be
\l{g1}
f\in W^{s,p}(\p B ; \R),\ \supp f\subset {\cal E}:=\{ x\in\p B;\, |x- (0,0,\ldots, 0, 1)|<\ve\}.
\ee

\smallskip
Let 
\bes
{\cal S}:=\{ x\in \p B;\, |x-(0,0,\ldots, 0, 1)|\le 2\ve\}\text{ and }{\cal H}:=\R^{n-1}\times \{1\}.
\ees

 Consider the projection $\Theta$ with vertex $0$ of 
 \bes
 \R^n_+:=\{ X=(X', X_n)\in\R^{n-1}\times\R;\, X_n>0\}
 \ees 
 onto ${\cal H}$, given by the formula $\Theta (X', X_n)=(X'/X_n,1)$. The restriction $\Pi$ of $\Theta$ to ${\cal S}$ maps ${\cal S}$ onto ${\cal N}:={\cal B}\times\{1\}$, with 
 \bes
 {\cal B}:=\{ X'\in\R^{n-1};\ |X'|\le r:=2\ve\sqrt{1-\ve^2}/(1-2\ve^2)\}, 
\ees
 and is a smooth diffeomorphism between these two sets. We choose $\ve$ such that $r=1/2$, and thus ${\cal B}\subset\{ X'\in\R^{n-1};\, \|X'\|_\infty\le 1/2\}$.
 
 \smallskip
 Set 
\be
\l{g3}
g(X'):=\begin{cases}|(X',1)|^a\, f(\Pi^{-1}(X', 1)),& \text{if } X'\in {\cal B}\\
0,&\text{otherwise}
\end{cases}.
\ee

 By the above, there exist $C, C'>0$ such that for every $f\in W^{s,p}(\p B)$  satisfying \eqref{g1}, the function $g$ defined in \eqref{g3} satisfies
\be
\l{j10}
C\|g\|_{W^{s,p}(\R^{n-1})}\le \|f\|_{W^{s,p}(\p B)}\le C'\|g\|_{W^{s,p}(\R^{n-1})}.
\ee

On the other hand, set ${\cal C}:=\{ (t\, Y', t);\, Y'\in {\cal B},\, t>0\}$ and
\bes
V_a g (X', X_n):=\begin{cases}
(X_n)^a\, g(X'/X_n),&\text{if }(X', X_n)\in {\cal C}\\
0,&\text{otherwise}
\end{cases}.
\ees

 Then we have 
$
U_a f(X', X_n)= V_a g (X', X_n)$, $\fo (X', X_n)\in \overline B\setminus\{ 0\}$. 

\smallskip
\smallskip
Write now $s=m+\sigma$, with $m\in\N$ and $0\le \sigma<1$.  When $s=m$, we consider, on $W^{s,p}(B)$, the semi-norm
\be
\l{mm1}
|F|_{W^{s,p}(B)}^p=\sum_{\substack{\alpha\in\N^n\setminus\{0\}\\ |\alpha|\le m}}\|\p^\alpha F\|_{L^p(B)}^p.
\ee

When $s$ is not an integer, we consider the semi-norm
\be
\l{mm2}
|F|_{W^{s,p}(B)}^p=\sum_{\substack{\alpha\in\N^n\setminus\{0\}\\ |\alpha|\le m}}\|\p^\alpha F\|_{L^p(B)}^p+\sum_{\substack{\alpha\in\N^n\\ |\alpha|= m}}|\p^\alpha F|_{W^{\sigma, p}(B)}^p
\ee
(the semi-norm on ${W^{\sigma, p}(B)}$ is the standard Gagliardo one.)

By the above discussion, in order to obtain \eqref{c2} it suffices to establish the estimate
\be
\l{f2}
|V_a g|_{W^{s,p}(B)}^p\le C\, \|g\|_{W^{s,p}(\R^{n-1})}^p, \ \fo g\in W^{s,p}(\R^{n-1})\text{ with }\supp g\subset {\cal B}.
\ee

\smallskip
Let $\alpha\in\N^n\setminus\{0\}$ be such that $|\alpha|\le m$. By a straightforward induction on $|\alpha|$, the distributional derivative $\p^\alpha [V_a g]$ satisfies
\be
\l{f3}
\p^\alpha [V_a g] (X',X_n)=\sum_{|\beta'|\le |\alpha|}V_{a-|\alpha|}[P_{\alpha, \beta'}\, \p^{\beta'}g] (X', X_n)\ \text{in }{\mathcal D}'(B\setminus\{0\}),
\ee
for some appropriate polynomials $P_{\alpha, \beta'}(Y')$, $Y'\in\R^{n-1}$, depending only on $a\in\R$, $\alpha\in\N^n$ and $\beta'\in\N^{n-1}$. 

\smallskip
Thanks to the fact that $g(X'/X_n)=0$ when $(X',X_n)\not\in {\cal C}$, we find that for any such $\alpha$ we have
\be
\l{f4}
\begin{aligned}
\int_B |\p^\alpha [V_a g] |^p\, dx &\le C\sum_{|\beta'|\le |\alpha|}\int_{{\cal C}\cap Q} (X_n)^{(a-|\alpha|) p} |\p^{\beta'}g(X'/X_n)|^p\, dX' dX_n\\
&=\frac C{n+(a-|\alpha|)p} \sum_{|\beta'|\le |\alpha|}\int_{ {\cal B}}|\p^{\beta'}g(Y')|^p\, dY'.
\end{aligned}
\ee

Here, we rely on
\bes\d \int_0^1 (X_n)^{n-1+(a-|\alpha|)p}\, dX_n=\frac 1{n+(a-|\alpha|)p}<\infty, 
\ees
thanks to the assumption \eqref{z2}, which implies that $(|\alpha|-a)p <n$.

\smallskip
Using \eqref{f4}, the fact that $V_a g\in W^{m,p}_{loc}(B\setminus \{0\})$ and the assumption that $n\ge 2$, we find that  the equality \eqref{f3} holds also in ${\cal D}'(B)$, that  
$V_a g\in W^{m, p}(B)$ and that
\be
\l{f20}
\|V_a g\|_{W^{m,p}(B)}^p\le C \|g\|_{W^{m,p}(\R^{n-1})}^p, \ \fo g\in W^{m,p}(\R^{n-1})\text{ with }\supp g\subset {\cal B}.
\ee

 \smallskip
 In particular, \eqref{f2} holds when $s$ is an integer.

\smallskip
Assume next that $s$ is not an integer. In view of \eqref{j10}, \eqref{f3} and \eqref{f20}, estimate \eqref{f2} will be a consequence of 
\be
\l{h1}
\begin{aligned}
|V_b [P h]|_{W^{\sigma, p}(B)}^p\le C\, \|h\|_{W^{\sigma, p}(\R^{n-1})}^p,\ &\fo h\in W^{\sigma, p}(\R^{n-1})\\
&\text{ with }\supp h\subset {\cal B},
\end{aligned}
\ee
under the assumptions
\be
\l{h2}
0<\sigma<1,\ 1\le p<\infty,\ n\ge 2,\ (\sigma-b)p<n
\ee
and
\be
\l{h3}
P\in C^\infty (\R^{n-1}).
\ee
(Estimate \eqref{h1} is applied with $b:=a-m$,  $P:=P_{\alpha, \beta'}$ and $h:=\p^{\beta'} g$.)

\smallskip
In turn, estimate \eqref{h1} follows from Step 1. Indeed, consider $k:\p B\to\R$ such that $\supp k\subset {\cal B}$ and $U_b k=V_b [Ph]$. (The explicit formula of $k$ can be obtained by \enquote{inverting} the formula \eqref{g3}.) By Step 1 and \eqref{j10}, we have
\bes
\begin{aligned}
|V_b [Ph]|_{W^{s,p}(B)}^p &= |U_b k|_{W^{s,p}(B)}^p\le C\|k\|_{W^{s,p}(\p B)}^p
\le C\|Ph\|_{W^{s,p}(\R^{n-1})}^p\\
&\le C\|h\|_{W^{s,p}(\R^{n-1})}^p.
\end{aligned}
\ees 

This completes Step 2 and the proof of Lemma \ref{a10}. 
\hfill$\square$

\medskip
Finally, we note that the assumptions of Lemma \ref{a10} are optimal in order to obtain that $U_a f\in W^{s,p}(B)$.

\begin{lemma}
\l{z1}
Let $a\in\R$, $s>0$, $1\le p<\infty$ and $n\ge 2$.
Assume that for some measurable function $f:\p B\to\R$ we have $U_a f\in W^{s,p}(B)$.  Then:
\begin{enumerate}
\item
$f\in W^{s,p}(\p B)$.
\item
If, in addition, $U_a f$ is not a polynomial, we deduce that $(s-a)p<n$.
\end{enumerate}
\end{lemma}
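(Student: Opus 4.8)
The plan is to prove the two assertions essentially by running the computations in the proof of Lemma~\ref{a10} \enquote{in reverse}, extracting lower bounds instead of upper bounds. Throughout, I will use spherical coordinates $X=rx$, $r\in(0,1)$, $x\in\p B$, together with the identity $U_af(rx)=r^af(x)$.

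\medskip
\noindent
\textit{Part 1: $U_af\in W^{s,p}(B)\Rightarrow f\in W^{s,p}(\p B)$.}
First I would treat the range $0<s<1$ with the Gagliardo semi-norm. Exactly as in Step~1 of the proof of Lemma~\ref{a10}, writing the double integral over $B\times B$ in spherical coordinates and performing the change of variable $\rho=tr$, one gets
\bes
|U_af|_{W^{s,p}(B)}^p=\frac{2}{n-(s-a)p}\int_{\p B}\int_{\p B}\int_0^1 t^{n-1}\,\frac{|f(x)-t^a f(y)|^p}{|x-ty|^{n+sp}}\,dt\,d\sigma(x)\,d\sigma(y),
\ees
\emph{provided} $n-(s-a)p>0$; and in any case the left-hand side dominates (up to the harmless positive factor, which we must first know is finite — but if $(s-a)p\ge n$ then $U_af\notin W^{s,p}$ unless it is a polynomial, which is the content of Part~2, so for Part~1 we may as well already be in the convergent regime, or argue locally as below) a constant times the triple integral on the right. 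Restricting the inner $t$-integral to $t\in[1/2,1]$, splitting $f(x)-t^af(y)=[f(x)-f(y)]+[1-t^a]f(y)$, and using the two-sided bound $c\,|x-y|\le |x-ty|\le C$ valid for $t\in[1/2,1]$ (here one uses $|x-ty|\ge |x-y|-|1-t|\,|y|$ combined with $|x-ty|^2=|x-y|^2+O(1-t)$, or more simply the explicit computation $|x-ty|^2=t^2+1-2t\langle x,y\rangle$), one obtains a \emph{lower} bound
\bes
|U_af|_{W^{s,p}(B)}^p\ge c\,|f|_{W^{s,p}(\p B)}^p-C\,\|f\|_{L^p(\p B)}^p,
\ees
the subtracted term coming from the $[1-t^a]f(y)$ piece and the localization of $t$ away from $0$ (it is finite by the $J<\infty$ argument of Step~1). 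Combined with $\|U_af\|_{L^p(B)}^p=\frac{1}{n+ap}\|f\|_{L^p(\p B)}^p$ from \eqref{c1} — which also shows $f\in L^p(\p B)$, since $n+ap=n-(s-a)p+sp$ has the right sign whenever the relevant integrals converge — this gives $f\in W^{s,p}(\p B)$. For $s\ge 1$ I would reduce to $0<\sigma<1$ exactly as in Step~2: localize $f$ near a pole, flatten the sphere by the projection $\Pi$, pass from $U_af$ to $V_ag$, and observe that differentiating the \emph{cone extension} $V_ag$ and restricting to a slice $X_n=\text{const}\in(0,1)$ recovers $g$ and its derivatives up to order $m$; the top-order Gagliardo term for $V_ag$ restricted to such a slice controls $|\p^\alpha g|_{W^{\sigma,p}}$, and lower-order terms follow by Fubini in $X_n$. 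This yields $g\in W^{s,p}(\R^{n-1})$, hence $f\in W^{s,p}(\p B)$ by \eqref{j10}.

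\medskip
\noindent
\textit{Part 2: if in addition $U_af$ is not a polynomial, then $(s-a)p<n$.}
Here I would argue by contradiction: suppose $(s-a)p\ge n$ yet $U_af\in W^{s,p}(B)$. The key is a homogeneity/scaling obstruction for functions homogeneous of degree $a$. Consider first $0<s<1$: the displayed identity for $|U_af|_{W^{s,p}(B)}^p$ shows that its finiteness forces the factor $\frac{1}{n-(s-a)p}$ to be positive (equivalently the integral over $r$, namely $\int_0^1 r^{n-(s-a)p-1}\,dr$, to converge), i.e. $n-(s-a)p>0$ — \emph{unless} the angular triple integral vanishes, which happens precisely when $f(x)=t^af(y)$ for a.e.\ $x,y,t$, i.e.\ $f$ is constant and $a=0$, making $U_af$ a constant, hence a polynomial. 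For general $s>0$ one uses the dilation $\lambda\mapsto (U_af)(\lambda X)=\lambda^a (U_af)(X)$: the semi-norm $|\ |_{W^{s,p}(B_\lambda)}$ of a degree-$a$ homogeneous function on the ball $B_\lambda$ of radius $\lambda$ scales like $\lambda^{n+(a-s)p}$ times the semi-norm on $B$ (for the top Gagliardo piece) and like $\lambda^{n+(a-m)p}\ge\lambda^{n+(a-s)p}$ for the lower-order $L^p$-of-derivatives pieces; if $n+(a-s)p\le 0$ then summing the contributions of the dyadic annuli $B_{2^{-j+1}}\setminus B_{2^{-j}}$ gives a divergent series, contradicting $U_af\in W^{s,p}(B)$, \emph{unless} every $\p^\alpha U_af$ with $|\alpha|=m$ vanishes identically, i.e.\ $U_af$ is a polynomial of degree $\le m$. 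I expect the main obstacle to be the $s\ge 1$ lower-bound bookkeeping in Part~1 — ensuring that restricting the cone extension $V_ag$ to a horizontal slice genuinely controls \emph{all} the Sobolev norms of $g$, and handling the cross terms produced by the polynomials $P_{\alpha,\beta'}$ in \eqref{f3} — whereas the scaling argument in Part~2 is robust and should be quick once set up carefully.
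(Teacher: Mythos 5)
Your Part 2 is essentially the paper's own proof: decompose $B$ into the dyadic annuli $\Omega_j$, use the degree-$a$ homogeneity of $U_af$ to get $|U_af|_{W^{s,p}(\Omega_j)}^p=2^{j[(s-a)p-n]}|U_af|_{W^{s,p}(\Omega_0)}^p$, and note that the resulting geometric series diverges unless $(s-a)p<n$ or the seminorm on $\Omega_0$ vanishes, the latter forcing $U_af$ to be a polynomial. (One small slip: for $\lambda=2^{-j}\le 1$ the lower-order pieces satisfy $\lambda^{n+(a-|\alpha|)p}\le\lambda^{n+(a-s)p}$, not $\ge$; this is harmless since the divergence argument only needs the top-order piece.) For Part 1, however, the paper does something much lighter than what you propose: it sets $G(r,x):=r^{-a}U_af(rx)$ on $(1/2,1)\times\p B$, observes that $U_af\in W^{s,p}(B)$ implies $G\in W^{s,p}((1/2,1)\times\p B)$ (pull back by the polar-coordinate diffeomorphism and multiply by the smooth factor $r^{-a}$), and invokes the a.e.-slice (Fubini) property of $W^{s,p}$ on a product to get $G(r,\cdot)\in W^{s,p}(\p B)$ for a.e.\ $r$; since $G(r,x)=f(x)$ is independent of $r$, this gives $f\in W^{s,p}(\p B)$ for all $s>0$ at once, with no case distinction, no reverse Gagliardo estimate and no flattening. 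Your route can be made to work, but two loose ends must be repaired: (i) for $0<s<1$ you may not invoke the identity with prefactor $2/(n-(s-a)p)$ before knowing $(s-a)p<n$ (that would be circular, as you sense); the clean fix is to restrict the Gagliardo double integral to an annulus, say $3/4<|X|<1$ and $|X|/2<|Y|<|X|$, \emph{before} passing to polar coordinates, so that both the $r$-integral and the $L^p$ identity \eqref{c1} localize to convergent integrals with no hypothesis on $(s-a)p$; (ii) for $s\ge 1$, the cross terms from the polynomials $P_{\alpha,\beta'}$ that you identify as the main obstacle disappear entirely if you differentiate $V_ag$ only in the horizontal variables, since $\p^{(\alpha',0)}[V_ag](X',X_n)=X_n^{a-|\alpha'|}(\p^{\alpha'}g)(X'/X_n)$ involves a single term; slicing at a.e.\ $X_n$ then controls exactly the derivatives of $g$ that enter $\|g\|_{W^{s,p}(\R^{n-1})}$. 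With these repairs your argument is correct, but the paper's slicing observation renders all of this machinery unnecessary.
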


\begin{proof}
{\it 1.} Let $G: (1/2, 1)\times \p B\to\R$, $G(r, x):=r^{-a}\, U_a f(r\, x)$. If $U_af\in W^{s,p}(B)$, then $G\in W^{s,p}((1/2, 1)\times \p B)$. In particular, we have  $G(r, \cdot)\in W^{s, p}(\p B)$ for a.e. $r$. Noting that $G (r,x)=f(x)$, we find that $f\in W^{s,p}(\p B)$.

\smallskip
\noindent
{\it 2.} Let 
\bes
\Omega_j:=\{ X\in\R^n;\, 2^{-j-1}<|X|<2^{-j}\},\ j\in\N.
\ees

We consider on each $\O_j$ a semi-norm as in \eqref{mm1}--\eqref{mm2}. Assuming that $U_a f$ is not a polynomial, we have $|U_a f|_{W^{s,p}(\Omega_0)}>0$. By scaling and the homogeneity of $U_a f$, we have
\bes
|U_a f|_{W^{s,p}(\Omega_j)}^p=2^{j[(s-a)p-n]}|U_a f|_{W^{s,p}(\Omega_0)}^p.
\ees

Assuming that $U_a f\in W^{s,p}(B)$, we find that 
\bes
\infty>|U_a f|_{W^{s,p}(B)}^p\ge \sum_{j\ge 0} 
|U_a f|_{W^{s,p}(\Omega_j)}^p=\sum_{j\ge 0}2^{j[(s-a)p-n]}|U_a f|_{W^{s,p}(\Omega_0)}^p>0,
\ees
so that $(s-a)p<n$.
\hfill$\square$
\end{proof}

\vskip 1cm
\noindent
$^{(1)}$ Department of Mathematics, Rutgers University,
Hill Center, Busch Campus, 110 Frelinghuysen Rd., Piscataway, NJ 08854, USA\\
brezis@math.rutgers.edu

\bigskip
\noindent
$^{(2)}$
Departments of Mathematics and Computer Science,
Technion - Israel Institute of Technology, Haifa, 32000,
Israel

\bigskip
\noindent
$^{(3)}$
Universit\'e de Lyon;
Universit\'e Lyon 1;
CNRS UMR 5208 Institut Camille Jordan;
43, boulevard du 11 novembre 1918,
F-69622 Villeurbanne Cedex, France
\\
mironescu@math.univ-lyon1.fr

\bigskip
\noindent
$^{(4)}$  Department of Mathematics, Technion - Israel Institute of Technology, 32000 Haifa, Israel\\
shafrir$@$math.technion.ac.il
\end{document}